\theoremstyle{plain}
\newtheorem{thm}{Theorem}[section]
\newtheorem{conj}[thm]{Conjecture}
\newtheorem{claim}[thm]{Claim}
\newtheorem{LEM}[thm]{\textbf{Lemma}}
\theoremstyle{plain}
\theoremstyle{plain}
\theoremstyle{plain}
\newcommand{\floor}[1]{\lfloor #1 \rfloor}
\DeclareMathOperator{\df}{def}
\DeclareMathOperator{\sun}{sun}
\begin{document}
\title{Vertex-distinguishing edge coloring of graphs}
\author{Yuping Gao\footnote{Lanzhou University, School of Mathematics and Statistics, Lanzhou 730000, China.}
	\qquad
	Songling Shan\footnote{Auburn University, Department of Mathematics and Statistics, Auburn, AL 36849, USA.
				}\qquad Guanghui Wang\footnote{Shandong University, School of Mathematics, and State Key Laboratory of Cryptography and Digital Economy Security, Jinan 250100, China.}\qquad Yiming Zhou\footnote{Shandong University, School of Mathematics, Jinan 250100, China.}
}

\date{\today}
\maketitle
\begin{abstract} Let $k \ge 1$ be an integer and let $G$ be a nonempty simple graph.
An \emph{edge-$k$-coloring} $\varphi$ of $G$ is an assignment of colors from $\{1,\ldots,k\}$ to the edges of $G$ such that no two adjacent edges receive the same color. For a vertex $v \in V(G)$, we write $\varphi(v)$ for the set of colors assigned to the edges incident with $v$. The coloring $\varphi$ is called \emph{vertex-distinguishing} if $\varphi(u) \ne \varphi(v)$ for every pair of distinct vertices $u,v \in V(G)$.
A vertex-distinguishing edge-$k$-coloring exists if and only if $G$ has at most one isolated vertex and no isolated edge. The least integer $k$ for which such a coloring exists is called the \emph{vertex-distinguishing chromatic index} of $G$, denoted $\chi'_{vd}(G)$.
In 1997, Burris and Schelp conjectured that for every graph $G$ with at most one isolated vertex and no isolated edge,
$ k(G) \;\le\; \chi'_{vd}(G) \;\le\; k(G)+1$,
where $k(G)$ is the natural lower bound required for a vertex-distinguishing coloring in $G$.
In 2004, Balister, Kostochka, Li, and Schelp verified the conjecture for graphs $G$ satisfying
$\Delta(G) \ge \sqrt{2|V(G)|} + 4 $ and
$\delta(G) \ge 5$.
For graphs that do not satisfy these conditions, the best known general upper bound on $\chi'_{vd}(G)$ remains  $|V(G)| + 1$, established in 1999 by Bazgan, Harkat-Benhamdine, Li, and Wo\'zniak.
In this paper, we prove that $\chi'_{vd}(G) \le  \floor{5.5k(G)+6.5}$,
which represents a substantial improvement over the bound $|V(G)| + 1$ whenever $k(G) = o(|V(G)|)$.
We further show that $\chi'_{vd}(G)  \le  k(G) + 3$,
for all $d$-regular graphs $G$ with $d \ge \log_2 |V(G)|\geq 8$.
\medskip

\noindent {\textbf{Keywords}: Edge coloring; vertex-distinguishing edge coloring; long path packing; multifan}
\end{abstract}

\section{Introduction}
  Let  $G$ be a simple graph.
We  denote  by  $V(G)$ and  $E(G)$ the vertex set and edge set of $G$ respectively. For two integers $p$ and $q$, let $[p,q]=\{i\in \mathbb{Z}: p\leq i\leq q\}$.
Given an integer $k\ge 1$,
 an \emph{edge-$k$-coloring} of $G$ is an assignment   of colors from $[1,k]$ to the edges of $G$ such that no two adjacent edges receive the same color.
 The \emph{chromatic index} of  $G$, denoted  by $\chi'(G)$, is the smallest integer $k$ for which $G$ admits an edge-$k$-coloring. 	In the 1960s, Gupta~\cite{Gupta-67}  and  Vizing~\cite{Vizing-2-classes}  independently proved the fundamental result that  $\Delta(G) \le \chi'(G) \le \Delta(G)+1$.  In the 1990s,
 Burris and Schelp~\cite{BS1997} and, independently, Aigner, Triesch, and Tuza~\cite{ATT1992} introduced a variant of edge coloring known as vertex-distinguishing edge coloring, which we introduce below.

 For an \emph{edge-$k$-coloring} $\varphi$ of $G$ and a vertex $v\in V(G)$,
 let $\varphi(v)=\{\varphi(e):  \text{$e$ is  incident with $v$ in $G$}\}$ be the set of colors assigned to edges incident with $v$, and we call it the \emph{color-set} of $v$.
 If $\varphi(u) \ne \varphi(v)$ for any two distinct vertices $u,v$ of $G$, then   $\varphi$  is called a \emph{vertex-distinguishing edge-$k$-coloring}, or a \emph{$k$-vdec} for short.   It is clear that $G$ has a $k$-vdec for some integer $k$ if and only if \emph{$G$  is vdec}, that is, $G$ has at most one isolated vertex
 and  has no isolated edge.  The smallest integer $k$
 for which  a vdec graph has a $k$-vdec is called  the \emph{vertex-distinguishing chromatic index} of $G$, and is denoted
by $\chi'_{vd}(G)$.
 For $d\in [\delta(G), \Delta(G)]$, let $n_d$ be the number of
 vertices of degree $d$ in $G$, and let $k(G) =\min\{k\in \mathbb{N}:  {k \choose d} \ge n_d, d\in [\delta(G), \Delta(G)]\}$.
 Then $k(G)$ is a natural lower bound for $\chi'_{vd}(G)$.
 Burris and Schelp~\cite{BS1997} proposed a conjecture which is
 a vertex-distinguishing analogue of the bound on the chromatic index  of a graph  by  Gupta~\cite{Gupta-67}  and  Vizing~\cite{Vizing-2-classes}.

 \begin{conj}[Burris and Schelp~\cite{BS1997}]\label{conj:VDEC}  Let $G$ be a vdec graph.  Then $ k(G)\le \chi'_{vd}(G) \le  k(G)+1$.
\end{conj}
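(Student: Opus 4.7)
The lower bound $k(G) \le \chi'_{vd}(G)$ is essentially a pigeonhole observation: for any edge-$k$-coloring $\varphi$, the color-set $\varphi(v)$ of a vertex of degree $d$ is a $d$-subset of $[1,k]$, so we must have $\binom{k}{d} \ge n_d$ for every $d \in [\delta(G), \Delta(G)]$, which by definition forces $k \ge k(G)$. The substance of Conjecture~\ref{conj:VDEC} is therefore the matching upper bound $\chi'_{vd}(G) \le k(G)+1$, and that is what my plan would target.

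The plan is to begin with a proper edge coloring of $G$ using $k(G)+1$ colors and then modify it locally until all vertex color-sets become distinct. Such a starting coloring exists by Vizing's theorem, since $n_{\Delta(G)}\ge 1$ forces $\binom{k(G)}{\Delta(G)}\ge 1$ and hence $k(G) \ge \Delta(G)$. The distinguishing step would proceed in two phases. First, a counting argument -- either a carefully designed random process or an entropy-based probabilistic bound -- should show that in a suitably chosen proper edge-$(k(G)+1)$-coloring the number of \emph{colliding pairs} $(u,v)$ with $\varphi(u)=\varphi(v)$ is already small, because for each fixed $d$ there are roughly $\binom{k(G)+1}{d}$ available color-sets but only $n_d \le \binom{k(G)}{d}$ vertices to place. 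Second, each remaining collision would be resolved by a local recoloring built from the two tools advertised in the paper's keywords: a \emph{multifan} at one vertex of the pair (in the style of Vizing's theorem), which exchanges one color in its color-set for a missing one, and a \emph{long path} in a suitable two-color subgraph, along which the change is propagated so that it cancels before it can damage a second vertex.

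The main obstacle will be ensuring that these local modifications do not interact destructively: a multifan-based swap at a vertex $x$ inevitably alters the color-set of at least one neighbor, so fixing one collision can create another. Overcoming this seems to require a robust structural lemma saying that, in any proper edge-$(k(G)+1)$-coloring, every vertex possesses many \emph{independent} admissible moves -- long alternating paths whose endpoints lie in high-flexibility regions -- so that conflicts can be resolved simultaneously, for instance by encoding the resolutions as a system of distinct representatives on an auxiliary hypergraph whose vertices are the colliding pairs and whose hyperedges are the available multifan/path corrections. Proving such a global flexibility property in the space of proper edge colorings is, I expect, the key bottleneck; it is presumably also the reason that only substantially weaker bounds such as $\chi'_{vd}(G)\le\lfloor 5.5\,k(G)+6.5\rfloor$ are currently accessible and that Conjecture~\ref{conj:VDEC} remains open outside the Balister--Kostochka--Li--Schelp regime $\Delta(G)\ge \sqrt{2|V(G)|}+4$, $\delta(G)\ge 5$.
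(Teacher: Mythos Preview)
The statement you are addressing is a \emph{conjecture}, not a theorem: the paper does not prove Conjecture~\ref{conj:VDEC}, and indeed its main results (Theorems~\ref{thm:main} and~\ref{thm:main2}) are explicitly weaker approximations to it. So there is no ``paper's own proof'' to compare your attempt against.

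Your treatment of the lower bound $k(G)\le\chi'_{vd}(G)$ is correct and is the standard pigeonhole argument. For the upper bound, what you have written is not a proof but an outline of a plausible attack together with an honest diagnosis of where it breaks down---and you yourself conclude by noting that this is ``presumably also the reason \ldots\ that Conjecture~\ref{conj:VDEC} remains open.'' That assessment is accurate: the paper confirms it by proving only $\chi'_{vd}(G)\le\lfloor 5.5\,k(G)+6.5\rfloor$ in general, using not multifan recolorings but a structural decomposition (a long-path packing from Lemma~\ref{thm:linear-forest}) combined with the optimal-coloring balancing of Theorem~\ref{thm:optimal} and a greedy recoloring of the packing (Lemma~\ref{thm:color-path}). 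Your multifan/alternating-path sketch is a different line of attack, but as you recognize, the ``global flexibility'' lemma you would need---guaranteeing enough independent admissible moves to resolve all collisions simultaneously---is precisely the missing ingredient, and nothing in your proposal supplies it.
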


Solving another conjecture of Burris and Schelp proposed in~\cite{BS1997},  in 1999,
Bazgan,   Harkat-Benhamdine,   Li, and  Wo\'zniak showed in~\cite{BHLW1999} that $\chi'_{vd}(G)\leq |V(G)|+1$  for all vdec graphs $G$.  Later in 2001,  they showed in~\cite{BHLW2001} that $\chi'_{vd}(G)\leq \Delta(G)+5$  if $G$ is a graph with $\delta(G)>\frac{|V(G)|}{3}$.  This minimum degree condition was improved to a degree-sum condition by
Liu and Liu in 2010~\cite{LL2010}.
Balister,   Kostochka,   Li, and  Schelp in 2004 made significant progress on Conjecture~\ref{conj:VDEC}
by establishing it for  graphs $G$ with $\Delta(G)\geq \sqrt{2|V(G)|}+4$ and $\delta(G)\geq 5$~\cite{BKLS2004}.
Recently, in~\cite{GSW2025},  the first three authors determined $\chi'_{vd}(G)$ exactly for large dense regular graphs.  In this paper,   we  provide a new upper bound on $ \chi'_{vd}(G) $
as follows.  The new bound is a substantial improvement over    $|V(G)|+1$
 if $k(G) =o(|V(G)|)$.

\begin{thm}\label{thm:main}
    Let $G$ be a vdec graph.  Then $ \chi'_{vd}(G) \le  \floor{5.5k(G)+6.5}$.
\end{thm}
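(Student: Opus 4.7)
The plan is to combine a long-path packing of $G$ with a prescribed-color-set framework, realized via iterative multifan-type recolorings. Let $k = k(G)$ and $N = \lfloor 5.5k+6.5 \rfloor$. Since $\binom{k}{d} \ge n_d$ for every degree $d$ appearing in $G$, I first fix an injection $v \mapsto S_v$ from $V(G)$ into distinct subsets of $[1,k]$ with $|S_v|=\deg_G(v)$. The problem then reduces to producing an edge-$N$-coloring $\varphi$ of $G$ with $\varphi(v)=S_v$ for every $v$, and the roughly $4.5k$ additional colors in $[k+1,N]$ function as a buffer pool for chain reversals.

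The first structural step I would carry out is a long-path packing lemma, decomposing $E(G)$ into paths of some constant minimum length together with a small residue handled separately. This reduction is what makes the key terms ``long path packing'' and ``multifan'' from the abstract do their work: once most vertices lie in the interior of a long path, the colors on a pair of edges at such a vertex can be swapped independently of the rest of the graph via a path-confined Kempe chain. This converts the global assignment problem into a sequence of local color-injection problems at individual vertices.

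Next, I would start from a proper edge-$(\Delta(G)+1)$-coloring (Vizing, using colors from $[1,k+1]$, since $\Delta(G)\le k$) and process vertices in a convenient order, say nonincreasing degree so that the high-degree vertices, whose target subsets have the least slack, are repaired first. At each step a multifan at $v$ together with path-confined swaps adjusts $\varphi(v)$ toward $S_v$. The arithmetic constant $5.5$ is expected to arise from a careful count of buffer colors needed per correction: one block of free colors supports the fan at $v$ itself, while further blocks guarantee that the Kempe swap along the long path through $v$ can always be performed using a color not currently required at another still-active vertex.

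The main obstacle, and where most of the technical work will go, is controlling the propagation of repairs: a Kempe swap at $v$ alters $\varphi(u)$ at every vertex $u$ on the swap path, potentially creating a fresh collision $\varphi(u)=\varphi(w)$ with an already-processed $w$. The long-path packing is exactly what keeps this bounded, since each admissible swap changes a color-set by exchanging one element for another, so the set of ``dangerous'' swap colors at each stage can be explicitly enumerated. Showing that the buffer $[k+1,N]$ is always large enough to offer a safe swap--and absorbing endpoints, short components of the packing, and any residual edges through a direct greedy coloring--is where the additive constant $+6.5$ is spent. If this control argument can be made uniform across all vdec $G$, the bound $\chi'_{vd}(G)\le \lfloor 5.5k(G)+6.5\rfloor$ follows.
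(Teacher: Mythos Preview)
Your proposal has a genuine gap at its foundation. You aim to produce an edge-$N$-coloring with $\varphi(v)=S_v$ \emph{exactly} for every $v$, where each $S_v\subseteq[1,k]$. But if this were achieved, no edge could carry a color from $[k+1,N]$, since every edge is incident to some vertex whose color-set lies in $[1,k]$; the ``buffer pool'' would therefore be absent from the final coloring, and you would have shown $\chi'_{vd}(G)\le k(G)$, contradicting known examples with $\chi'_{vd}(G)=k(G)+1$. Worse, the prescribed-set problem is not even locally feasible: nothing prevents your injection from assigning adjacent vertices $u,v$ disjoint sets $S_u,S_v$, in which case no color can be placed on $uv$. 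Kempe swaps and multifans cannot rescue an infeasible target system; they only navigate among proper colorings. So the reduction in your second sentence fails, and the rest of the outline is built on it.

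The paper's argument is structurally different. It does not prescribe targets. Instead it starts from an \emph{optimal} edge-$(k(G)+1)$-coloring and invokes a balance result of Balister, Kostochka, Li, and Schelp to conclude this coloring is already semi-vertex-distinguishing: every color-set is shared by at most two vertices. The long-path packing is a linear forest $F\subseteq G$ with components $P_3,P_4,P_5$ (not a decomposition of $E(G)$) chosen so that the few vertices outside $F$ have controlled degree and neighborhood. One block of $k(G)+1$ new colors repairs collisions among vertices outside $F$; a second block of $\lfloor 3.5(k(G)+1)+1\rfloor$ colors then recolors the edges of $F$ greedily, path by path, to separate every remaining collision pair. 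The arithmetic $2(k(G)+1)+\lfloor 3.5(k(G)+1)+1\rfloor=\lfloor 5.5k(G)+6.5\rfloor$ is where the constant comes from. No multifan recoloring is used; the keyword in the abstract is not load-bearing for this theorem.
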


For regular graphs $G$ with relatively large degree, we can get a better upper bound on $ \chi'_{vd}(G)$
as stated below.

 \begin{thm}\label{thm:main2}
    Let $G$ be a $d$-regular   graph   with $d\ge \log_2 |V(G)|\geq 8$.
    Then $ \chi'_{vd}(G) \le  k(G)+3$.
\end{thm}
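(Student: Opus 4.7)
Write $n=|V(G)|$ and set $k=k(G)+3$. Since $G$ is $d$-regular, every proper edge $k$-coloring assigns each vertex a color-set of size exactly $d$; thus it suffices to exhibit an injection $\sigma: V(G)\to \binom{[1,k]}{d}$ that is \emph{realizable} as a proper edge $k$-coloring of $G$. Equivalently, writing $V_c=\{v:c\in\sigma(v)\}$ for each $c\in[1,k]$, we need perfect matchings $M_c$ of $G[V_c]$ that are pairwise edge-disjoint; the counting identity $\sum_c |V_c| = nd$ then forces $\{M_c\}$ to partition $E(G)$.

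The plan is probabilistic. Choose $\sigma$ as a uniformly random injection into $\binom{[1,k]}{d}$, exploiting the counting surplus $\binom{k}{d}/n \ge \prod_{i=1}^{3}(k(G)+i)/(k(G)+i-d)$; under $d\ge \log_2 n\ge 8$ one can check $k(G)=\Theta(d)$ and this surplus is a constant bounded away from~$1$. Chernoff-type concentration (with negative correlation for a random injection) yields that each $|V_c|$ is close to $nd/k$ and that each induced $G[V_c]$ resembles a typical large induced subgraph of a $d$-regular host, so that $G[V_c]$ has a perfect matching with high probability. Parity (odd $|V_c|$) is corrected by a single-vertex color swap---exchanging a color in some $\sigma(v)$ with a currently-unused one---for which the three extra colors provide the required slack while preserving injectivity.

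The heart of the argument is certifying that the matchings $M_c$ can be chosen \emph{simultaneously} edge-disjoint. I would treat this as a list edge-coloring problem on $G$ with lists $L(uv)=\sigma(u)\cap\sigma(v)$ of typical length $d^2/k=\Omega(d)$, and combine (i) individual matching existence from the probabilistic setup, (ii) a Lov\'asz Local Lemma or iterative-absorption argument to assemble the $M_c$'s in sequence, and (iii) the multifan and Vizing-chain machinery signalled in the paper's keywords, used to locally recolor and resolve residual conflicts between color classes. The main obstacle I anticipate is precisely this global assembly: existence of each $M_c$ on its own is routine under the concentration estimates, but forcing the whole family $\{M_c\}$ to form an edge partition of $G$ is a rigid combinatorial constraint, whose resolution requires both the three extra colors as slack and delicate Vizing-type local augmentations to absorb any leftover defects. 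The condition $d\ge \log_2 n$ is exactly the threshold at which the underlying concentration inequalities are sharp enough for the probabilistic assembly to succeed with positive probability, with $\log_2 n \ge 8$ controlling the numerical constants.
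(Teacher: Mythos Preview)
Your probabilistic scheme is entirely different from the paper's argument, and its central step fails.

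The paper's proof is deterministic and short: since $G$ is $d$-regular with $d\ge 2$, every vertex has degree $d>(d+1)/2$, so the linear forest $F$ of Lemma~\ref{thm:linear-forest} is spanning; an optimal edge-$k(G)$-coloring of $H=G-E(F)$ is then semi-vd by Theorem~\ref{thm:optimal} together with the bounds $\binom{k(G)}{d-1}\ge\tfrac{2}{3}n$ and $\binom{k(G)}{d-2}\ge\tfrac{3}{5}n$; and Lemma~\ref{lem:semi-VDE} colors $F$ with three fresh colors separating each remaining conflicting pair. No randomness and no Vizing-chain machinery are involved; the keyword ``multifan'' plays no role in this proof.

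The concrete gap in your plan is the realizability step, which you yourself flag as the main obstacle but do not resolve. Finding edge-disjoint perfect matchings $M_c$ of the $G[V_c]$ that partition $E(G)$ is precisely a list-edge-coloring of $G$ from lists $L(uv)=\sigma(u)\cap\sigma(v)$; in the boundary regime $d\approx\log_2 n$ one has $k(G)$ as large as $2d$, so these lists have expected size about $d/2$ while $\Delta(G)=d$, and no list-edge-coloring technique (LLL, nibble, or Vizing fans) succeeds with lists of half the maximum degree. Your perfect-matching claim for each $G[V_c]$ also relies on expansion that $G$ need not possess: if $G$ is a disjoint union of copies of $K_{d,d}$, then for each color and each copy the two sides of $V_c$ almost surely differ in size, producing $\Theta(n)$ defects across all colors that no ``single-vertex swap'' repairs while keeping $\sigma$ injective. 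Three extra colors are nowhere near enough slack to rescue this route.
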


The remainder of the paper is organized as follows.
In the next section, we prove Theorems~\ref{thm:main} and~\ref{thm:main2}.
The proof of Theorem~\ref{thm:main}  is based on
 two  new preliminary results, which we prove
 in the last section.

\section{Proof of Theorems~\ref{thm:main} and~\ref{thm:main2}}

Let $G$ be a graph and $S\subseteq V(G)$.  Then $G[S]$
is the  subgraph of $G$ induced by  $S $, and $G-S=G[V(G)\setminus S]$.
For a vertex $x \in V(G)$, we write $G-x$ for $G-\{x\}$.
For two disjoint subsets $A,B\subseteq V(G)$, let $E_G(A,B)=\{uv\in E(G): u\in A, v\in B\}$ be the set of edges of $G$ with one endvertex in $A$ and the other in $B$.   We write $E_G(u,B)$
for $E_G(\{u\}, B)$.
For $F\subseteq E(G)$, we write $G[F]$ for the subgraph of $G$ induced by the edge set $F$, and let $G-F$ be obtained from $G$ by removing all the edges of $F$. For an edge $e\in E(G)$,
we also write $G-e$ for $G-\{e\}$.

Let $k\ge 1$ be an integer, and $\varphi$ be an edge-$k$-coloring of $G$.
For $X\subseteq V(G)$, we  say
that $\varphi$ is \emph{vertex-distinguishing \emph{(}vd\emph{)} on $X$} if $\varphi(u) \ne \varphi(v)$
for any two distinct vertices $u,v\in X$; and that $\varphi$ is \emph{semi-vertex-distinguishing \emph{(}semi-vd\emph{)}  on $X$} if  for any $S\subseteq [1,k]$, there are at most two distinct $u,v\in X$
for which $\varphi(u) = \varphi(v)=S$.

For a subset $S\subseteq [1,k]$,
let  $n_\varphi^S  $ be the number of vertices of $G$
 whose color-set is $S$ under $\varphi$.
We say $\varphi$ is an \emph{optimal edge-$k$-coloring} if it
 minimizes the quantity $\sum_{S\subseteq [1,k]}(n_\varphi^S)^2$. Such optimal colorings clearly exist whenever $k\geq \chi'(G)$. In~\cite{BKLS2004}, Balister, Kostochka, Li, and Schelp established the following result.

\begin{thm}\label{thm:optimal}{\rm (Balister, Kostochka, Li, and Schelp~\cite{BKLS2004})} In any optimal edge-$k$-coloring $\varphi$ of $G$, $|n_\varphi^S-n_\varphi^{S'}|\leq 2$ for all subsets $S,S'\subseteq [1,k]$ with $|S|=|S'|$.
\end{thm}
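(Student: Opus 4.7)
I would argue by contradiction using Kempe chain swaps. Suppose $\varphi$ is optimal but there exist $S,S'\subseteq[1,k]$ with $|S|=|S'|$ and $n_\varphi^S-n_\varphi^{S'}\ge 3$; the aim is to recolor $\varphi$ into a proper edge-$k$-coloring $\varphi'$ with $\sum_T(n_{\varphi'}^T)^2<\sum_T(n_\varphi^T)^2$, contradicting the optimality of $\varphi$.

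Among all violating pairs $(S,S')$, I would pick one with $|S\triangle S'|$ minimum and set $T=S\cap S'$. Choose any $a\in S\setminus S'$ and $b\in S'\setminus S$, and let $H_{ab}$ be the subgraph of $G$ on edges of colors $a$ or $b$. Since $\varphi$ is proper, each component of $H_{ab}$ is a path or an even cycle. Every vertex $v$ with $\varphi(v)=S$ has degree $1$ in $H_{ab}$ (it carries an $a$-edge but no $b$-edge), so it lies at an endpoint of a unique path component with some partner $w_v$. Performing the Kempe swap (exchange $a\leftrightarrow b$ along this path) changes $\varphi(v)$ from $T\cup\{a\}$ to $T\cup\{b\}$ while toggling the $a/b$-membership of $\varphi(w_v)$.

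I would first settle the base case $|S\triangle S'|=2$, where $S=T\cup\{a\}$ and $S'=T\cup\{b\}$, so that a single Kempe swap takes an $S$-vertex to an $S'$-vertex. If some path in $H_{ab}$ has both endpoints with color-set $S$, a single swap converts two $S$-vertices into $S'$-vertices and decreases $\sum_T(n_\varphi^T)^2$ by $4(n_\varphi^S-n_\varphi^{S'}-2)\ge 4$, contradicting optimality. Otherwise, I would classify the $n_\varphi^S$ chains emanating from $S$-vertices by the class of their partners $w_v$, and use the surplus $n_\varphi^S-n_\varphi^{S'}\ge 3$ together with a pigeonhole argument to locate some individual chain whose swap already gives a strict decrease in $\sum_T(n_\varphi^T)^2$.

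For $|S\triangle S'|\ge 4$, the minimality of $|S\triangle S'|$ forces any intermediate color-set $U$ with $|U|=|S|$ and $|S\triangle U|,|U\triangle S'|<|S\triangle S'|$ to satisfy $|n_\varphi^S-n_\varphi^U|\le 2$ and $|n_\varphi^U-n_\varphi^{S'}|\le 2$, pinning the counts $n_\varphi^U$ into a narrow range. Iterating the base-case Kempe argument through a carefully chosen sequence of intermediate classes, or performing a chained sequence of Kempe swaps that cumulatively transforms an $S$-vertex into an $S'$-vertex, then produces the required contradiction.

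The main obstacle is the base-case bookkeeping. Moving a single $v$ from $S$ to $S'$ contributes a decrease of at least $2(n_\varphi^S-n_\varphi^{S'}-1)\ge 4$ to $\sum_T(n_\varphi^T)^2$, but the partner $w_v$'s simultaneous class change contributes $2(n_\varphi^{R'}-n_\varphi^{R}+1)$ (where $R,R'$ are $w_v$'s old and new classes), which may be positive and partially offset the gain. The crux is to show, by averaging over the $n_\varphi^S$ chains at $S$-vertices and when needed over the symmetric choice of $(a,b)$, that at least one chain has a partner whose class switch is mild enough to preserve the net decrease.
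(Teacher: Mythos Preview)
The paper does not contain a proof of this theorem. It is stated as a result of Balister, Kostochka, Li, and Schelp~\cite{BKLS2004} and used as a black box in the proofs of Theorems~\ref{thm:main} and~\ref{thm:main2}; there is no in-paper argument to compare your proposal against.

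On the merits of your sketch: the Kempe-chain framework is indeed the natural route, and your treatment of the subcase where two $S$-vertices share an $(a,b)$-chain is correct. But the plan has two genuine gaps that you yourself flag without closing. First, in the base case $|S\triangle S'|=2$, once every $S$-vertex is paired with a partner $w$ of class $R\notin\{S,S'\}$, a single swap fails precisely when $n_\varphi^{R'}\ge n_\varphi^{R}+1$; your appeal to ``pigeonhole'' or ``averaging over chains and over $(a,b)$'' is not yet an argument, since the partners' classes $R$ may have any cardinality and there is no a priori bound on $n_\varphi^{R'}-n_\varphi^{R}$ to average against. One needs either a global potential over all $(a,b)$-chains or a two-chain swap to force a decrease. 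Second, your reduction from $|S\triangle S'|\ge 4$ via minimality only yields $|n_\varphi^S-n_\varphi^{S'}|\le 4$ through an intermediate $U$ by the triangle inequality, not $\le 2$; the phrase ``iterating through intermediate classes'' does not by itself produce a strict decrease of $\sum_T (n_\varphi^T)^2$, so this step also needs a concrete mechanism.
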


We will also need the two lemmas below in order to prove
Theorem~\ref{thm:main}.

\begin{restatable}{lem}{mainthm1}\label{thm:color-path}
 Let $F$ be  a linear forest  whose components are
   paths of lengths $2$, $3$, or $4$,  and  let $B$ be  a set of
disjoint $2$-element subsets of $V(F)$.  Suppose that $F$ has an edge-$k$-coloring $\varphi$
such that for any $v\in V(F)$, there is a set  $B_\varphi(v)$ of at most $2{k \choose d_F(v)}-2$  vertices of $F$
that all have the same degree as $v$ in $F$ such that  $\varphi(u) \ne \varphi(v)$ for any $u\in B_\varphi(v)$.
Then there is an edge-$\floor{3.5k+1}$-coloring $\psi$ of $F$
such that  $\psi(u) \ne \psi(v)$ for each pair $\{u,v\}\in B$,   and
$\psi(u) \ne \psi(v)$ for each $u\in B_\varphi(v)$.
\end{restatable}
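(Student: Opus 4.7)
The plan is to define $\psi$ component by component. Order the components of $F$ arbitrarily as $P_1, \ldots, P_m$ and set $K := \lfloor 3.5k+1 \rfloor$. Since every required inequality $\psi(u)\neq\psi(v)$ is symmetric in its endpoints, I only need to enforce each constraint when coloring the later of the two components containing its vertices. When it is time to color $P_i$, each $v\in V(P_i)$ of degree $d:=d_F(v)$ acquires an external forbidden list
\[
\mathcal{F}_v \;=\; \bigl\{\psi(u) : u \in B_\varphi(v)\cup B(v),\ u\in V(P_1)\cup\cdots\cup V(P_{i-1})\bigr\},
\]
where $B(v)$ is the $B$-partner of $v$ (if any). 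A $B$-partner of different degree is distinguished automatically by color-set size, so the members of $\mathcal{F}_v$ are $d$-subsets of $[1,K]$ and $|\mathcal{F}_v|\le 2\binom{k}{d}-1$.

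The remaining task, for each $P_i$ of length $\ell\in\{2,3,4\}$, is to find a proper edge-coloring using colors in $[1,K]$ that avoids $\mathcal{F}_v$ at every $v\in V(P_i)$ and honors the internal same-degree pair constraints within $P_i$ (each such pair imposes one diagonal inequality on the edge-color variables, e.g.\ $c_1\neq c_3$). I would proceed by case analysis on $\ell$. For $\ell=2$ with $P_i=v_1v_2v_3$ and $c_j:=\psi(v_jv_{j+1})$: fix $c_1$ outside the $\le 2k-1$ bad colors at $v_1$ (at least $K-2k+1$ choices) and then count $c_2\neq c_1$ outside the $\le 2k-1$ bad colors at $v_3$ and outside the bad $2$-sets through $c_1$ at $v_2$. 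Summing over $c_1$ and using that at most $k^2-k-1$ pairs are bad at $v_2$ gives at least
\[
(K-2k+1)(K-2k) - 2(k^2-k-1) \;\ge\; 0.25k^2+3.5k+2 \;>\; 0
\]
valid pairs whenever $K\ge 3.5k$, which holds for every $k\ge 1$.

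For $\ell=3,4$ an analogous computation — parametrized by first fixing the middle edge color(s) and summing over the remaining ones, using the identity $\sum_c \#\{S\in\mathcal{F}_{v_j}:c\in S\}\le 2|\mathcal{F}_{v_j}|$ — yields a positive polynomial in $k$ of degree $\ell$ as the lower bound on the number of valid colorings. For $\ell=3$, fixing $c_2$ first gives at least $K(K-2k)^2 - 4(K-2k)(k^2-k-1)\ge 1.875k^3+6k^2+6k$ valid triples, and each internal diagonal constraint removes only $O(K^{\ell-1})$ colorings. The main obstacle will be the bookkeeping for $\ell=4$, where up to four internal same-degree pairs (one between endpoints and three among internal vertices) can coexist, forcing careful inclusion–exclusion to avoid double-subtraction. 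A secondary subtlety is the small-$k$ regime, where $2\binom{k}{d}-1$ can be nonpositive so that $\mathcal{F}_v$ is empty at degree-$d$ vertices; the local problem is then essentially unconstrained, but the counting must be organized to handle this case cleanly, either by unification or by explicit verification.
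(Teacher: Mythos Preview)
Your high-level strategy---order the components and color them greedily, with each vertex carrying an external forbidden list of size at most $2\binom{k}{d}-1$---is exactly the paper's approach. The difference lies in how the local problem (coloring a single path $Q_i$ against the external lists) is solved. The paper does not attempt a global count of valid colorings of $Q_i$; instead it picks the edge colors \emph{sequentially}, maintaining for each vertex $v_j$ a candidate set $S_j$ of colors with few bad partners. Concretely, $S_0$ consists of the $\ge 1.5k+2$ colors avoiding the $\le 2k-1$ forbidden singletons at $v_0$, and a Markov-type bound shows that all but at most $1.5k+1$ colors $\alpha$ have at most $k-2$ bad partners in $S_0$ with respect to $\mathcal{F}_{v_1}$; these good $\alpha$ form $S_1$ with $|S_1|\ge 2k$, and for each $\alpha\in S_1$ at least four choices in $S_0$ survive. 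Iterating this gives $|S_j|\ge 2k$ for each internal $j$, which suffices to pick the colors back from $v_t$ to $v_0$.

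Two simplifications over your plan come out of this. First, the paper forces all edge colors on $Q_i$ to be pairwise distinct, so every internal same-degree constraint (the diagonals $c_0\ne c_2$, etc.) is satisfied automatically---your ``main obstacle'' of inclusion--exclusion over internal pairs at $\ell=4$ evaporates. Second, your $\ell=3$ estimate
\[
\sum_{c_2}\bigl(K-2k-f_1(c_2)\bigr)\bigl(K-2k-f_2(c_2)\bigr)\ \ge\ K(K-2k)^2-4(K-2k)(k^2-k-1)
\]
tacitly assumes the two factors are nonnegative for every $c_2$; but $f_i(c_2)=\#\{S\in\mathcal{F}_{v_i}:c_2\in S\}$ can be as large as $k^2-k-1\gg K-2k$, so the product bound needs justification. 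Restricting to those $c_2$ with small $f_i(c_2)$ fixes this---and that restriction is precisely the paper's definition of the sets $S_j$. Your plan is viable, but once you patch these two points you will have reproduced the paper's argument.
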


\begin{restatable}{lem}{mainthm}\label{thm:linear-forest}
  Every graph  $G$ has a linear forest $F$ satisfying the following properties:
    \begin{enumerate}[{\rm (1)}]
        \item  Each component of $F$ is a path of length $2$, $3$, or $4$;
        \item $G-V(F)$ consists of isolated vertices or  isolated edges, and each vertex of $V(G)\setminus V(F)$ has degree at most $(\Delta(G)+1)/2$ in $G$;
        \item For each $v\in V(G)\setminus V(F)$ and $u\in N_G(v)\cap V(F)$,  we have $d_F(u)=2$.
    \end{enumerate}
\end{restatable}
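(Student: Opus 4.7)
The plan is to take $F$ to be an extremal linear forest in $G$ whose components are paths of length $2$, $3$, or $4$, chosen so that $|V(F)|$ is maximum and, subject to that, the non-increasing sequence of $G$-degrees of the vertices in $V(G)\setminus V(F)$ is lexicographically smallest. Property~(1) holds by construction.

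For the first assertion of property~(2), I first observe that $G-V(F)$ has maximum degree at most $1$: otherwise some $x\in V(G)\setminus V(F)$ has two neighbors $y,z\in V(G)\setminus V(F)$, and the path $yxz$ of length $2$ can be appended to $F$, contradicting the maximality of $|V(F)|$. For property~(3), suppose $v\in V(G)\setminus V(F)$ has a neighbor $u\in V(F)$ that is an endpoint of its component $P$; if $|P|\in\{2,3\}$, prepending the edge $vu$ to $P$ yields a path of length $3$ or $4$, while if $|P|=4$, say $P=ux_2x_3x_4x_5$, replacing $P$ by the two length-$2$ paths $vux_2$ and $x_3x_4x_5$ does the same. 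Each modification strictly increases $|V(F)|$, contradicting extremality.

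The crux is the degree bound on leftover vertices. Suppose for contradiction that some $v\in V(G)\setminus V(F)$ has $d_G(v)>(\Delta(G)+1)/2$; by the previous steps, at least $d_G(v)-1\ge(\Delta(G)+1)/2$ neighbors of $v$ lie in $V(F)$, each internal to its component. For such an internal neighbor $u$ on a path $P$, I would consider the \emph{swap} $F\mapsto F':=F-uu'+vu$ for one of the two path-edges $uu'$ incident with $u$. A short case analysis on the length of $P$ and the position of $u$ shows that at least one of the two choices of $u'$ produces an $F'$ with $|V(F')|=|V(F)|$ and valid component lengths, \emph{except} when $u$ is the middle vertex of a length-$4$ path. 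In the non-exceptional case, either $G-V(F')$ still has maximum degree at most $1$, in which case choosing $u$ and $u'$ with $d_G(u')<d_G(v)$ contradicts the secondary lexicographic minimality; or else $G-V(F')$ contains a vertex of degree at least $2$, and appending the resulting $P_3$ to $F'$ strictly increases $|V(F)|$, contradicting primary maximality.

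The main obstacle is precisely the middle-of-length-$4$ case. I would argue that since $v$ has at least $(\Delta(G)+1)/2$ internal neighbors in $V(F)$, either some neighbor is not the middle of a length-$4$ path (which reduces to the previous case), or every one of them is the unique middle vertex of a length-$4$ component of $F$, forcing at least $(\Delta(G)+1)/2$ distinct length-$4$ paths in $F$. In this latter scenario one has to perform a coordinated two-path swap across two such length-$4$ paths $P,P'$, which rearranges them into length-$2$ and length-$3$ pieces while absorbing $v$ into $V(F)$; the delicate point is to arrange this multi-swap so that it either strictly augments $|V(F)|$ (via a released pair of vertices that form a $P_3$ in the new leftover) or produces an extremal $F'$ with strictly smaller secondary statistic, completing the contradiction.
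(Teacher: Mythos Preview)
Your extremal approach is genuinely different from the paper's, which invokes Kaneko's $\{P_3,P_4,P_5\}$-factor theorem, takes a barrier set $S$ maximizing $\sun(G-S)-2|S|$, and then runs a Hall-type argument on the bipartite graph obtained by contracting the components of $G-S$ in order to cover $S$ together with every ``high-degree'' sun component. Your arguments for property~(1), the first clause of~(2), and property~(3) via maximality of $|V(F)|$ are fine.

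The degree bound in~(2), however, cannot be obtained from your extremal choice, and the gap is not just in the unspecified ``coordinated two-path swap''. Consider the graph $G$ consisting of a vertex $v$ joined to the middle vertices $c_1,c_2,c_3$ of three otherwise disjoint copies of $P_5$. Here $\Delta(G)=3$, and taking $S=\{c_1,c_2,c_3\}$ shows $\sun(G-S)=7>6=2|S|$, so $G$ has no $\{P_3,P_4,P_5\}$-factor; in fact the \emph{only} packing with $|V(F)|=15$ is the three $P_5$'s, leaving $v$ alone. But $d_G(v)=3>(\Delta+1)/2=2$, so your $F$ violates~(2), and every neighbour of $v$ in $F$ is the middle vertex of a length-$4$ path. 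No swap of any kind can repair this while keeping $|V(F)|=15$, because $v$ is the unique vertex whose removal leaves a $\{P_3,P_4,P_5\}$-factor. A packing that does satisfy (1)--(3), e.g.\ $\{v c_1 d_1 e_1,\,a_2b_2c_2d_2e_2,\,a_3b_3c_3d_3e_3\}$, has only $14$ vertices. Thus maximizing $|V(F)|$ first is the wrong primary extremality; one must be willing to leave some low-degree sun components partially uncovered in order to absorb every high-degree one, and this trade-off is exactly what the paper's global matching argument accomplishes. (A secondary issue: even in your non-exceptional case, you assert one can choose $u'$ with $d_G(u')<d_G(v)$, but never justify why such a choice exists.)
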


 \proof[Proof of Theorem~{\rm\ref{thm:main}}] Let $G$ be a vdec graph, and let  $k =k(G)+1$ be an integer.
 By Lemma~\ref{thm:linear-forest},  $G$ has a linear forest $F$
that satisfies the three properties as listed in Lemma~\ref{thm:linear-forest}.
  Let $X=V(G)\setminus V(F)$.

 As $k(G) \ge \Delta(G)$, it follows that $k\ge \chi'(G)$ by Vizing's Theorem on  the chromatic index of a graph.
 Thus $G$ has an optimal edge-$k$-coloring $\varphi$.
 We first show that  $\varphi$ is semi-vd on $V(G)$.
Suppose that there exists a color-set $S \subseteq [1,k]$ with $|S|=d$ and $n_{\varphi}^S \geq 3$.
Since ${k \choose d}\geq {k(G) \choose d} \geq n_d$, there must exist another set $S' \subseteq [1,k]$ with $|S'|=d$ and $n_{\varphi}^{S'}=0$. Therefore  $n_{\varphi}^S -n_{\varphi}^{S'} \ge 3$,  contradicting Theorem~\ref{thm:optimal}.
Thus $\varphi$ is semi-vd on $V(G)$.

 If  $X =\emptyset$ or $\varphi$ is vd on $X$, then we proceed directly to the next step.
 Thus we assume for now that $X \ne \emptyset$ and $\varphi$ is not vd on $X$.
 Let
 $$B_X=\{\{u,v\}\subseteq X: u\neq v,\varphi(u)=\varphi(v)\}$$
 be the set of pairs of distinct vertices of $X$ that have the same color-set under $\varphi$.
 As $G$ has at most one isolated vertex and has no isolated edge, for each $\{u,v\} \in B_X$,
 we know that $d_G(u)=d_G(v)\ge 1$ and $d_G(u)=d_G(v)\ge 2$ if $uv\in E(G)$.

 We will recolor some edges incident with exactly one vertex from each element of $B_X$ by some new colors
 to obtain an edge coloring that is vd on $X$ and still semi-vd on $V(G)$.
For each pair $\{u,v\}\in B_X$, we select an edge $e_{u}$ as follows.  If $E_G(u, V(G)\setminus X) \ne \emptyset$, we let $e_u\in E_G(u, V(G)\setminus X)$.
If $E_G(u, V(G)\setminus X) = \emptyset$ but $E_G(v, V(G)\setminus X) \neq \emptyset$,
we let $e_u\in E_G(v, V(G)\setminus X)$. If $E_G(u, V(G)\setminus X) =E_G(v, V(G)\setminus X) = \emptyset$,
then as each component of $G[X]$ is either a single vertex
or an edge, we know that $d_G(u)=d_G(v)=1$ and each of $u$  and $v$ is contained in an edge-component of  $G[X]$.
Since  $G$ has no isolated edge, we know that $v$ is not the neighbor of $u$ in $G$.
In this case, we let $e_u$ be the unique edge incident with $u$ in $G$. Furthermore,
if there is $\{u,v\} \in B_X$ such that  $e_u\in E_G(u, V(G)\setminus X)$ is the selected edge and $u$ is not the endvertex of any other selected edge $e_w$,  $v$ is contained in an
edge component $vz$ of $G[X]$, and $e_z=zv$ is also a selected edge, we then delete $e_u$
from our selection but use $e_z$ to replace $e_u$ (so $e_z$ is a selected edge for two elements of $B_X$).
Let $E_X=\{e_u: \{u,v\} \in B_X\}$ be the collection of all such edges we selected,
and let $H$ be the spanning subgraph of $G$ with $E(H) =E_X$. Then $H$ is a bipartite graph with maximum degree at most $\max\{2, \Delta(G)-2\}$
by Property (3) of $F$.   By our choice of the edges in $E_X$, for any $\{u,v\}\in B_X$,    we have   $\{d_H(u), d_H(v)\} \in \{\{1,0\}, \{2,0\}, \{2,1\}\}$.
For each edge $e\in E(H)$, we recolor $e$ by $\varphi(e)+k$.  Denote by $\psi_0$ this edge coloring of $H$.
Note that $\psi_0(u) =\emptyset$ if $u$ is  an isolated vertex in $H$, and that we introduced at most $k$ new colors $k+1, \ldots, 2k$.

Denote by $\varphi_1$ the new    edge-$(2k)$-coloring of $G$ that is obtained from $\varphi$ by recoloring the edges of $H$ under $\psi_0$.   We next show that
for any two vertices $u,v\in V(G)$,
$\varphi(u) \ne \varphi(v)$ implies that $\varphi_1(u)  \ne \varphi_1(v)$.
Suppose otherwise that $\varphi_1(u) =\varphi_1(v)$.
Then we must have $\psi_0(u)=\psi_0(v)$ and $\varphi_1(u)\setminus \psi_0(u) =\varphi_1(v)\setminus \psi_0(v)$, i.e., $\varphi(u)\setminus \psi_0(u) =\varphi(v)\setminus \psi_0(v)$.  This implies that $\varphi(u)=\varphi(v)$, a contradiction.
Thus $\varphi_1$ is still semi-vd on $V(G)$.
By our construction of $H$, for any $\{u,v\}\in B_X$, we have $\{d_H(u), d_H(v)\} \in \{\{1,0\}, \{2,0\}, \{2,1\}\}$,
and so $\varphi_1(u) \ne \varphi_1(v)$ for any $\{u,v\}\in B_X$. For any two vertices $u,v\in X$ with $\{u,v\}\not\in B_{X}$, we also have  $\varphi_1(u) \ne \varphi_1(v)$ by the above statement as $\varphi(u)\neq \varphi(v)$. Hence $\varphi_1$
is vd on $X$.  Let
$$B=\{\{u,v\}: u,v\in V(G), u\ne v, \varphi_1(u)=\varphi_1(v)\}.$$
 Let $\varphi_1^F$ be the restriction of $\varphi_1$ to $F$.  As the coloring of $F$ under $\varphi_1$ is the same as
 that under $\varphi$, the colors on the edges of $F$
 are from $\{1,\ldots, k\}$. Thus $\varphi_1^F$
 is an edge-$k$-coloring of $F$. For each
 $v\in V(F)$, let
 $$B_{\varphi_1}(v) =\{u\in V(F): d_G(u)=d_G(v), d_F(u)=d_F(v), \varphi_1(u) \ne \varphi_1(v),
 \varphi_1(u)\setminus \varphi_1^F(u) =\varphi_1(v)\setminus \varphi_1^F(v)\}.$$
Since $\varphi_1$ is semi-vd on $V(G)$ and any color-set is used on at most two vertices,
then  $|B_{\varphi_1}(v)| \le 2({k \choose d_F(v)}-1)$ for each $v\in V(F)$.

 Applying Lemma~\ref{thm:color-path}, we have an edge coloring $\psi_1$  that recolors the edges of $F$
 using at most another  $\floor{3.5k+1}$ new colors  $2k+1, \ldots,  2k+\floor{3.5k+1}$ such that
$\psi_1(u) \ne \psi_1(v)$ for each pair $\{u,v\}\in B$ with $u,v\in V(F)$,   and
$\psi_1(u) \ne \psi_1(v)$   for any $v\in V(F)$ and any $u\in B_{\varphi_1}(v)$.
Denote by $\varphi_2$ the new    edge-$\floor{5.5k+1}$-coloring.
We  check below  that $\varphi_2(u) \ne \varphi_2(v)$ for distinct $u,v\in V(G)$, and so $\varphi_2$
is a desired $\floor{5.5k(G)+6.5}$-vdec of $G$.

Consider first that  $u,v\in X$. Then we have $\varphi_2(u) \ne \varphi_2(v)$
as $\psi_1(u) = \psi_1(v) =\emptyset$ and $\varphi_1(u) \ne \varphi_1(v)$  by $\varphi_1$
is vd on $X$.
Consider then that  $u \in X$ and $v \in V(G)\setminus X$.  Then  $\psi_1(u) \ne \psi_1(v)$
and so $\varphi_2(u)  \ne \varphi_2(v)$.
Consider  next that $u,v\in V(G)\setminus X$ but $\{u,v\} \not\in B$. Thus $\varphi_1(u) \ne \varphi_1(v)$.
If $\varphi_1(u)\setminus \varphi_1^F(u) \ne \varphi_1(v)\setminus \varphi_1^F(v)$  or  $\psi_1(u) \ne \psi_1(v)$, then we  have $\varphi_2(u)  \ne \varphi_2(v)$.
Thus we assume that $\varphi_1(u)\setminus \varphi_1^F(u)= \varphi_1(v)\setminus \varphi_1^F(v)$
and $\psi_1(u) = \psi_1(v)$. This implies that $d_G(u)=d_G(v), d_F(u)=d_F(v)$ and so $u\in B_{\varphi_1}(v)$.
However, by Lemma~\ref{thm:color-path}, we should
 have $\psi_1(u) \ne \psi_1(v)$, a contradiction.
 Lastly,  we let $u,v\in V(G)\setminus X$ and $\{u,v\} \in B$.
Then $\psi_1(u) \ne \psi_1(v)$ by Lemma~\ref{thm:color-path}, and so $\varphi_2(u)  \ne \varphi_2(v)$ again.
Therefore $\varphi_2$ is a $\floor{5.5k+1}$-vdec of $G$ and so $\chi'_{vd}(G) \le \floor{5.5k(G)+6.5}$.
 \qed

Let $P_1,\ldots,P_k$ be a family of vertex-disjoint paths.
The collection $\mathcal{P}=\{P_1,\ldots,P_k\}$ is called a \emph{long path system} if $|V(P_i)|\geq 3$ for every $i \in [1,k]$.
The following technical lemma, established by Bazgan, Harkat-Benhamdine, Li, and Wo\'{z}niak~\cite{BHLW1999}, will be
used to prove Theorem~\ref{thm:main2}.

\begin{LEM}\label{lem:semi-VDE}{\rm (Bazgan, Harkat-Benhamdine, Li, and Wo\'{z}niak~\cite{BHLW1999})}
Let $\mathcal{P}=\{P_1,\ldots,P_k\}$ be a long path system and $B$ be a set of
disjoint $2$-element subsets of   $V(\bigcup_{i=1}^k P_i)$.
Then there exists an edge-$3$-coloring  $\varphi$ of $\bigcup_{i=1}^k P_i$ such that $\varphi(x) \ne \varphi(x')$ for each pair $\{x,x'\}\in B$.
\end{LEM}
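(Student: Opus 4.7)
Our plan is to prove the lemma by induction on the total number of edges $N=\sum_{i=1}^{k} n_i$, where $n_i=|E(P_i)|$. The base case is $N=2$: the system is a single path $v_0v_1v_2$, $B$ contains at most one pair, and coloring the two edges with $1$ and $2$ gives color-sets $\{1\},\{1,2\},\{2\}$, which distinguish every possible pair (pairs of distinct degrees are also automatic).

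For the inductive step, suppose first that some path $P_\ell$ has length at least $3$. Strip its last edge $e=v_{n_\ell-1}v_{n_\ell}$ and the endpoint $v_{n_\ell}$ to obtain a smaller long path system $\mathcal{P}'$, and let $B'\subseteq B$ consist of those pairs avoiding $v_{n_\ell}$. The inductive hypothesis yields a $3$-edge-coloring $\varphi'$ of $\bigcup_{P\in\mathcal{P}'}P$ satisfying $B'$. We extend $\varphi'$ to $\bigcup_{i=1}^k P_i$ by choosing a color $c\in\{1,2,3\}\setminus\{\varphi'(v_{n_\ell-2}v_{n_\ell-1})\}$ for $e$. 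Two conditions restrict $c$: (a) if $v_{n_\ell}$ is paired with some $y$ in $B$, then $\{c\}\ne\varphi(y)$; and (b) if $v_{n_\ell-1}$ is paired with some $z$ in $B$, then the new color-set $\{c,\varphi'(v_{n_\ell-2}v_{n_\ell-1})\}$ at $v_{n_\ell-1}$ must differ from $\varphi(z)$. Each of (a),(b) forbids at most one of the two candidates, so generically an admissible $c$ remains. When both are forbidden, we perform a Kempe-chain swap along the maximal alternating subpath of $P_\ell$ ending at $v_{n_\ell-1}$; this reshapes $\varphi'$ locally, changes $\varphi'(v_{n_\ell-1})$, and frees an admissible $c$, while a case analysis verifies that all constraints in $B'$ remain satisfied after the swap.

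If no path has length at least $3$, then every path has length exactly $2$; write $P_i=u_0^{(i)}u_1^{(i)}u_2^{(i)}$ with edge colors $a_i,b_i\in\{1,2,3\}$ and $a_i\ne b_i$. Within-path and mixed-degree pairs are automatically distinguished, so only cross-path endpoint-vs-endpoint pairs remain, and each such pair imposes one inequality among $a_i,b_i,a_j,b_j$. Since each endpoint lies in at most one pair of $B$, these inequalities form an auxiliary constraint graph of maximum degree at most $2$ on the variables $\{a_i,b_i:i\in[1,k]\}$, which is a disjoint union of paths and cycles; a direct greedy construction on its components then yields valid $(a_i,b_i)$'s. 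The technical heart of the proof is the Kempe-chain step in the first case: showing that the swap can always be localized so that no previously satisfied pair in $B'$ is broken is the main obstacle, requiring one to track how a single edge recoloring propagates along $P_\ell$ and interacts with pairs that straddle $P_\ell$ and other components.
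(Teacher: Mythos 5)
This lemma is not proved in the paper at all: it is quoted verbatim from Bazgan, Harkat-Benhamdine, Li, and Wo\'{z}niak~\cite{BHLW1999}, so there is no in-paper argument to compare yours against. Judged on its own, your proposal has a genuine gap exactly where you locate "the technical heart." In the inductive step, when both candidate colors for the stripped edge are forbidden, say the adjacent edge has color $\gamma$ and the two candidates are $c_1$ (forbidden because $\varphi(y)=\{c_1\}$) and $c_2$ (forbidden because $\varphi(z)=\{c_2,\gamma\}$), the only Kempe swap that helps locally is the $\{\gamma,c_1\}$-chain through $v_{n_\ell-2}v_{n_\ell-1}$: swapping the $\{\gamma,c_2\}$-chain recolors that edge to $c_2$ and then the remaining candidate $\gamma$ still produces the color-set $\{\gamma,c_2\}=\varphi(z)$ at $v_{n_\ell-1}$. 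So there is no freedom in the choice of chain, and the unique admissible swap changes the color-set at the far endpoint of that chain (a path endpoint loses its singleton color, or the interior vertex where the chain terminates has one element of its pair replaced). If that far endpoint is matched in $B'$ to a vertex realizing precisely the new color-set, a previously satisfied constraint breaks, and nothing in your argument prevents this or bounds the cascade of further repairs. Announcing that "a case analysis verifies" the constraints survive is not a proof; the case analysis is the entire content of the lemma, and as described the induction is not closed.

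A secondary issue: in the all-$P_3$ case you assert that only cross-path endpoint-versus-endpoint pairs remain, but a pair in $B$ consisting of the two centers of distinct paths imposes $\{a_i,b_i\}\ne\{a_j,b_j\}$, a constraint on four variables that does not fit your "constraint graph of maximum degree two on the variables $a_i,b_i$" (whose components you then claim are paths and cycles). That case is likely repairable by treating each path as a single variable with six ordered states and at most three incident constraints, but as written the reduction is incorrect. For the purposes of this paper you need not reprove the lemma at all; if you do want a self-contained proof, you should either carry a stronger inductive hypothesis that leaves slack at chain endpoints or follow the construction in~\cite{BHLW1999}.
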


\proof[Proof of Theorem~{\rm\ref{thm:main2}}]
Let $G$ be a $d$-regular graph on $n$ vertices  with $d \ge \log_2 n\geq 8$.
By the result of Balister,   Kostochka,   Li, and  Schelp in 2004~\cite{BKLS2004},
we may assume that   $d < \sqrt{2n}+4$ and so $G$ is not a complete graph.
As ${2d \choose 0}+{2d \choose 1}+\ldots +{2d \choose 2d}=2^{2d}=4^{d}$, so the largest term ${2d \choose d}\geq \frac{4^{d}}{2d+1} >2^{d}\geq n$ when $d \ge \log_2 n$, it follows that $k(G) \le 2d$.
 By Lemma~\ref{thm:linear-forest},  $G$ has a linear forest $F$
that  covers all the vertices of $G$. Let $H=G-E(F)$. Then $H$ contains at most $\frac{2}{3}n$ vertices of degree $d-1$ and at most $\frac{3}{5}n$ vertices of degree $d-2$. Since $d\ge 8$,
we know that $\delta(H) = d-2 \ge 6$.

 As $k(G) \ge d+2 \ge  \Delta(H)+3$, it follows that $k(G)\ge \chi'(H)$ by Vizing's Theorem on  the chromatic index of a graph.
 Thus $H$ has an optimal edge-$k(G)$-coloring $\varphi$.
 We first show that  $\varphi$ is semi-vd on $V(H)$.
Suppose that there exists a color-set $S \subseteq [1,k(G)]$ with $|S|=s$ and $n_{\varphi}^S \geq 3$.
Since $d\geq 8$ and $k(G) \le 2d$, ${k(G) \choose d} >n$ implies that
\[{k(G) \choose d-1}=\frac{d}{k(G)-d+1}
{k(G) \choose d}\geq \frac{d}{2d-d+1}n
 \geq \frac{2}{3}n\] and   \[{k(G) \choose d-2}=\frac{d(d-1)}{(k(G)-d+1)(k(G)-d+2)}
{k(G) \choose d}\geq \frac{d(d-1)}{(d+1)(d+2)}n
 \geq \frac{3}{5}n.\]
  Thus there   exists another set $S' \subseteq [1,k(G)]$ with $|S'|=s$ and $n_{\varphi}^{S'}=0$. Therefore  $n_{\varphi}^S -n_{\varphi}^{S'} \ge 3$,  contradicting Theorem~\ref{thm:optimal}.
Thus, $\varphi$ is semi-vd on $V(H)$. Let $$B=\{\{u,v\}: u,v\in V(H), u\ne v, \varphi(u)=\varphi(v)\}.$$

 By Lemma~\ref{lem:semi-VDE}, $F$ has   an edge-3-coloring   $\psi$
such that  $\psi(u) \ne \psi(v)$ for each pair $\{u,v\}\in B$ using three new colors $k(G)+1, k(G)+2, k(G)+3$.
Let $\varphi_1$ be the edge-$(k(G)+3)$-coloring  of $G$ obtained by combining $\varphi$ and $\psi$.
We claim  below that $\varphi_1$ is a $(k(G)+3)$-vdec of $G$. It is clear that $\varphi_1$ is an edge-$(k(G)+3)$-coloring of
$G$. It suffices to check that $\varphi_1(u) \ne \varphi_1(v)$ for distinct $u,v\in V(G)$.

 Consider  first  that $u,v\in V(G)$ but $\{u,v\} \not\in B$. Then we have $\varphi(u) \ne \varphi(v)$ and so we  have $\varphi_1(u)  \ne \varphi_1(v)$.
Consider then that  $u,v\in V(G)$ and $\{u,v\} \in B$.
Then $\psi(u) \ne \psi(v)$ by Lemma~\ref{lem:semi-VDE}, and so $\varphi_1(u)  \ne \varphi_1(v)$ again.
Therefore, $\varphi_1$ is a $(k(G)+3)$-vdec of $G$.
\qed

\section{Proofs of Lemmas~\ref{thm:color-path} and~\ref{thm:linear-forest}}

We now prove Lemmas~\ref{thm:color-path} and~\ref{thm:linear-forest}.

\begin{restatable*}{lem}{mainthm1}\label{thm:color-path}
 Let $F$ be  a linear forest  whose components are
   paths of lengths $2$, $3$, or $4$,  and  let $B$ be  a set of
disjoint $2$-element subsets of $V(F)$.  Suppose that $F$ has an edge-$k$-coloring $\varphi$
such that for any $v\in V(F)$, there is a set  $B_\varphi(v)$ of at most $2{k \choose d_F(v)}-2$  vertices of $F$
that all have the same degree as $v$ in $F$ such that  $\varphi(u) \ne \varphi(v)$ for any $u\in B_\varphi(v)$.
Then there is an edge-$\floor{3.5k+1}$-coloring $\psi$ of $F$
such that  $\psi(u) \ne \psi(v)$ for each pair $\{u,v\}\in B$,   and
$\psi(u) \ne \psi(v)$ for each $u\in B_\varphi(v)$.
\end{restatable*}

\proof   For each $v\in V(F)$, let $B_\varphi^*(v)= B_\varphi(v) \cup \{u\}$
if  there exists $u\in V(F)$ such that $\{u,v\} \in B$, and let $B_\varphi^*(v)= B_\varphi(v)$ otherwise.
We will recolor the edges of $F$ using at most  $\floor{3.5k+1}$
colors to obtain a desired coloring $\psi$.  Let $Q_1, \ldots, Q_s$
be all the components of $F$,  where $s\ge 1$  is an integer. We will color the edges
of each $Q_i$ in the order from $Q_1$ to $Q_s$, and we define
the \emph{current color-set} of a vertex of $F$ as an empty set if the component of $F$
that contains this vertex has not been colored.

Let $Q_0$ be the null graph.
For each $i\ge 1$, for $v\in V(\bigcup_{j=1}^{i-1}Q_j)$, let
$B^{i-1}_\varphi(v) = B_\varphi(v)\cap V(\bigcup_{j=1}^{i-1}Q_j)$ and
$B^{i-1}= \{\{u,v\} \in B: u,v \in V(\bigcup_{j=1}^{i-1}Q_j)\}$.
Suppose that we have edge colored $Q_1, \ldots, Q_{i-1}$ for some $i\ge 1$ using at most $\floor{3.5k+1}$ colors, and let
$\psi$ be this partial edge coloring that  satisfies the two conditions with respect to  the sets  $B^{i-1}_\varphi(v)$
and $B^{i-1}$.

Let $Q_i=v_0v_1v_2\ldots v_t$ for  $t\in \{2,3,4\}$. We now extend $\psi$
by coloring
the edges of $Q_i$. All the edges of $Q_i$ will be assigned different colors,
and so we only need to ensure that no vertex $v$ of $Q_i$
has its color-set the same as  that for any vertex from $B^*_\varphi(v) \cap V(\bigcup_{j=1}^{i-1}Q_j)$.
Since $|B^*_\varphi(v_0) \cap V(\bigcup_{j=1}^{i-1}Q_j)| \le 2{k\choose 1}-2+1=2k-1$,  and
we have $\floor{3.5k+1}$ different colors, there is a set $S_0$
of at least $1.5k+2$ colors such that for any $\alpha\in S_0$,
we have $\{\alpha\} \ne \psi(u)$ for any $u\in B^*_\varphi(v_0) \cap V(\bigcup_{j=1}^{i-1}Q_j)$.

Since $|B^*_\varphi(v_1) \cap V(\bigcup_{j=1}^{i-1}Q_j)| \le 2{k \choose 2}-1$, there are at most $1.5k+1$
colors  $\alpha$ from $[1, \floor{3.5k+1}]$  for which at least $k-1$  distinct colors  $\beta \in S_0$ satisfy
 $\{\alpha,\beta\} =\psi(u)$ for some $u\in B^*_\varphi(v_1) \cap V(\bigcup_{j=1}^{i-1}Q_j)$ (since ${k \choose 2}+\lceil0.5k\rceil(k-1) \ge 2{k \choose 2}-1$).
Thus,  there is a set $S_1$ of at least $\floor{3.5k+1}-\floor{1.5k+1}=2k$ colors from $[1,\floor{3.5k+1}]$
such that for any $\alpha \in S_1$,
 there exist at least $1.5k+2-(k-1) \ge 4$ distinct colors $\beta \in S_0$  for which $\{\alpha,\beta\} \ne \psi(u)$ for any  $u\in B^*_\varphi(v_1) \cap V(\bigcup_{j=1}^{i-1}Q_j)$ (note that $k\ge 2$).
 Similarly,  for each $i\in [1,t-1]$,  there is a set $S_i$ of at least $2k$ colors
such that for any $\alpha \in S_i$,
 there exist at least four  distinct colors   $\beta \in S_{i-1}$  for which $\{\alpha,\beta\} \ne \psi(u)$ for any  $u\in B^*_\varphi(v_i) \cap V(\bigcup_{j=1}^{i-1}Q_j)$.

 Since $|B^*_\varphi(v_t) \cap V(\bigcup_{j=1}^{i-1}Q_j)| \le 2k-1$ and $|S_{t-1}| \ge 2k$,
 there is  $\alpha_{t-1}\in S_{t-1}$  such that $\{\alpha_{t-1}\} \ne \psi(u)$ for any $u\in B^*_\varphi(v_t) \cap V(\bigcup_{j=1}^{i-1}Q_j)$. For the color $\alpha_{t-1} \in S_{t-1}$, by our
 choice of $S_{t-1}$, there are at least four distinct colors $\beta \in S_{t-2}$
 such that  $\{\alpha_{t-1}, \beta \} \ne \psi(u)$ for any $u\in B^*_\varphi(v_{t-1}) \cap V(\bigcup_{j=1}^{i-1}Q_j)$.    We choose $\alpha_{t-2} \in S_{t-2}$ with such property such that $\alpha_{t-2}  \ne \alpha_{t-1}$.
 Similarly, for this choice of $\alpha_{t-2}$, if $t\ge 3$, we can choose $\alpha_{t-3} \in S_{t-3}$
 such that $\{\alpha_{t-2}, \alpha_{t-3}\} \ne \psi(u)$ for any $u\in B^*_\varphi(v_{t-2}) \cap V(\bigcup_{j=1}^{i-1}Q_j)$ and that $\alpha_{t-3} \not\in \{\alpha_{t-1}, \alpha_{t-2}\}$. If $t=4$, for  the  choice of $\alpha_{1}$,  we can choose $\alpha_{0} \in S_{0}$
 such that $\{\alpha_{0}, \alpha_{1}\} \ne \psi(u)$ for any $u\in B^*_\varphi(v_{1}) \cap V(\bigcup_{j=1}^{i-1}Q_j)$
 and that $\alpha_0\not\in \{\alpha_1, \alpha_2, \alpha_3\}$.
 Now we extend $\psi$ by coloring $v_0v_1, \ldots, v_{t-1}v_t$ respectively using colors $\alpha_0, \ldots, \alpha_{t-1}$.

 Repeat the same procedure for $Q_{i+1}$ if $i\le s-1$, we can find an edge-$\floor{3.5k+1}$-coloring $\psi$ of $F$
 such that $\psi(u) \ne \psi(v)$ for each pair $\{u,v\}\in B$,   and
$\psi(u) \ne \psi(v)$ for each $u\in B_\varphi(v)$.
\qed

We define some terminology and provide some preliminary results for the proof of Lemma~\ref{thm:linear-forest}.
Let $\mathcal{F}$ be a set of connected graphs.  An \emph{$\mathcal{F}$-packing} of a graph $G$
is a  subgraph $H$ of $G$ such that each component of $H$ is an element of $\mathcal{F}$.
If, in addition, $H$ is spanning, then $H$ is called an $\mathcal{F}$-\emph{factor} of $G$. A vertex $v\in V(G)$ is said to be \emph{covered} by $H$ if $v\in V(H)$.
The existence of $\mathcal{F}$-factors when $\mathcal{F}$ consists of long paths is of particular interest,
and there are established  sufficient and necessary conditions for its existence.   To state one
of such results for our proof, we define below some terminology.

A graph $R$ is called \emph{factor-critical} if for every vertex $x\in V(R)$, the graph $R-x$ has a 1-factor. A graph $H$ is called a \emph{sun} if either
$H = K_1$, $H = K_2$ or $H$ is
obtained from a factor-critical graph $R$ by adding, for each vertex $v\in V(R)$,
a new vertex $w:=w(v)$ together with a new edge $vw$  (see Figure~\ref{figure1}).  We call $R$ the \emph{core} of $H$, and
call $w$ the \emph{pendant} of $v$.
It follows that any sun of order at least three has order at least six and is not bipartite. Denote by
  $\sun(G)$ the number of  components of $G$ that are suns and by $P_n$ a path on $n$ vertices. In~\cite{K2003}, Kaneko established a criterion for a graph to contain a $\{P_3, P_4, P_5\}$-factor. A shorter proof of this theorem, together with a formula for the size of a maximum $\{P_3, P_4, P_5\}$-factor, was later given in~ \cite{KKK2004}.

\begin{figure}[!htb]
	\begin{center}
\begin{tikzpicture}

{\tikzstyle{every node}=[draw,circle,fill=black,minimum size=4pt,
                            inner sep=0pt]
 \draw (0,0) node (v1)  {};

 \draw (3.5,0) node (u1)  {}
        -- ++(180:1.5cm) node (u2)  {}
        ;
   \draw (6,0) node (x1)  {}
        -- ++(60:1.5cm) node (x2)  {}
        -- ++(300:1.5cm) node (x3)   {}
         --(x1)
         --++(225:1cm)node (x4)   {};

      \draw (x2)--++(90:1cm) node (x5)   {};
       \draw (x3)-- ++(315:1cm) node (x6)   {};
   }

\end{tikzpicture}
\end{center}
\caption{Three examples of suns}
\label{figure1}
\end{figure}
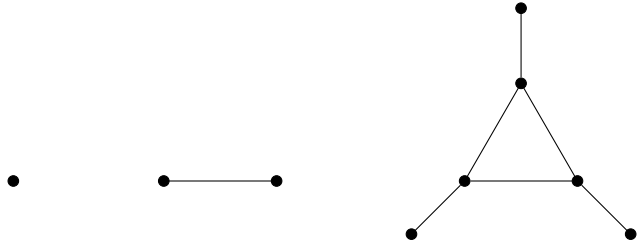

\begin{thm}[Kaneko~\cite{K2003}; Kano, Katona, and Kir\'{a}ly~\cite{KKK2004}]\label{thm:pathfactor}
Let $G$ be a graph. Then $G$ has a $\{P_3, P_4, P_5\}$-factor if and only if
\begin{equation*}
\sun(G-S)\leq 2|S|\ \mbox{for\ all}\ S\subset V (G).
\end{equation*}
\end{thm}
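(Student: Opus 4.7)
Suppose $G$ admits a $\{P_3,P_4,P_5\}$-factor $F$ and fix $S\subset V(G)$. The crux is the self-contained claim that no sun $C$ admits a $\{P_3,P_4,P_5\}$-factor on its own vertex set. For $C=K_1$ or $C=K_2$ this is trivial since neither contains a $P_3$. Otherwise $C$ has a factor-critical core $R$ of odd order $2k+1\ge 3$ together with pendant set $\{w_1,\ldots,w_{2k+1}\}$, where $w_i$ is adjacent only to $v_i\in V(R)$. Since each pendant has degree $1$ in $C$, any path of a hypothetical factor of $C$ that contains $w_i$ must begin $w_iv_i\cdots$ and carries at most one additional pendant at its other end. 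I would classify the paths by length (type $P_3$, $P_4$, or $P_5$) and by the number of pendants (zero, one, or two), letting $a,b,c$ count the single-pendant $P_3,P_4,P_5$, $d,e$ count the double-pendant $P_4,P_5$, and $f,g,h$ count the pendant-less $P_3,P_4,P_5$. Counting the core vertices and pendants covered by $F$ yields
\[
2a+3b+4c+2d+3e+3f+4g+5h \;=\; 2k+1 \qquad\text{and}\qquad a+b+c+2d+2e \;=\; 2k+1.
\]
Subtraction gives $a+2b+3c+e+3f+4g+5h=0$, forcing every one of $a,b,c,e,f,g,h$ to vanish. The sole surviving case reads $2d=2k+1$, impossible by parity. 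Armed with this claim, every sun component $C$ of $G-S$ must contain at least one edge of $F$ with its other endpoint in $S$, because otherwise the paths of $F$ meeting $V(C)$ would collectively form a $\{P_3,P_4,P_5\}$-factor of $C$. Since each $s\in S$ lies on exactly one path of $F$ and has at most two neighbours along that path, at most $2|S|$ such crossing edges can exist; hence $\sun(G-S)\le 2|S|$.

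\textbf{Sufficiency.} For the converse, I would argue by contradiction, choosing a counterexample $G$ of minimum $|V(G)|+|E(G)|$ that satisfies the sun condition yet admits no $\{P_3,P_4,P_5\}$-factor. Fix a \emph{maximum} $\{P_3,P_4,P_5\}$-packing $H$ of $G$, and let $U=V(G)\setminus V(H)$; by hypothesis $U\ne\emptyset$. I plan to develop an alternating-exchange machinery analogous to Berge augmenting paths in matching theory: each exchange moves an uncovered vertex into $V(H)$ by rerouting a packed path (possibly converting between lengths $3,4,5$) while exposing a new vertex, so that $|V(H)|$ is preserved. Taking the equivalence classes under chains of such exchanges, one associates to each $U$-meeting class a neighbourhood in $V(H)$; let $S$ be the union of these neighbourhoods. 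The target is to show that each $U$-meeting class, together with the vertices in $S$ that attach to it, presents itself as a sun component of $G-S$, and moreover that the number of such classes strictly exceeds $2|S|$, contradicting the hypothesis.

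\textbf{Main obstacle.} The decisive difficulty is turning the heuristic ``exchange classes are factor-critical cores of suns'' into a rigorous Gallai--Edmonds-style decomposition theorem for $\{P_3,P_4,P_5\}$-packings. Two technicalities stand out: first, the packing ranges over three path lengths, so a single augmentation may have to simultaneously shorten one path from $P_5$ to $P_3$ and lengthen another from $P_3$ to $P_5$, and one must verify that such coordinated swaps always yield a legal packing; second, chains that terminate at a degree-one vertex need to interact correctly with the fact (forced implicitly by the sun condition applied to $S=\emptyset$) that $G$ has no isolated vertex and no isolated edge. Establishing the analogue of the Gallai--Edmonds structure theorem in this setting is the main technical content of Kaneko's original argument, and I expect it to occupy the bulk of the proof.
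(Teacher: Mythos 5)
This theorem is not proved in the paper at all: it is imported verbatim from Kaneko~\cite{K2003} and Kano--Katona--Kir\'aly~\cite{KKK2004}, so there is no in-paper argument to compare yours against, and your attempt has to stand on its own. The necessity half of your write-up does stand: the observation that a pendant can only be a path-endpoint attached to its unique core neighbour, the eight-parameter count of core vertices versus pendants, the subtraction forcing $a=b=c=e=f=g=h=0$, and the parity contradiction $2d=2k+1$ together give a complete and correct proof that no sun has a $\{P_3,P_4,P_5\}$-factor (you rightly omit a ``double-pendant $P_3$'' type, since the middle vertex would need two pendants). The deduction $\sun(G-S)\le 2|S|$ from the fact that each $s\in S$ carries at most two $F$-edges is also sound.

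The genuine gap is the entire sufficiency direction, which is the substantive half of the theorem. What you have written there is a research plan, not a proof: ``I plan to develop an alternating-exchange machinery,'' ``the target is to show that each $U$-meeting class \ldots presents itself as a sun component,'' and your own closing admission that establishing the Gallai--Edmonds-style structure ``is the main technical content \ldots and I expect it to occupy the bulk of the proof.'' None of the load-bearing steps is carried out: you do not define the exchange operations precisely, do not verify that coordinated length-changing swaps ($P_5\to P_3$ in one path while $P_3\to P_5$ in another) always produce a legal packing, do not prove that the exchange classes are factor-critical or that their pendant structure makes them suns of $G-S$, and do not derive the counting inequality $\sun(G-S)>2|S|$ from maximality of the packing. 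Since the converse implication is exactly what Theorem~\ref{thm:pathfactor} is cited for in the proof of Lemma~\ref{thm:linear-forest} (to extract a set $S$ violating the sun condition and then build the packing around it), this half cannot be waved at; as submitted, your argument proves only the easy implication.
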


We will also need the following result in the proof of Lemma~\ref{thm:linear-forest}.

\begin{LEM}\label{starfactor}{\rm(Akiyama and Kano~\cite{AK2011}, Theorem 2.10)} Let $G$ be a bipartite graph with partite sets $X$ and $Y$, and $f$ be a function from $X$ to the set of positive integers. If for every subset $S\subseteq X$ we have $|N_G(S)|\geq \sum_{v\in S}f(v)$, then $G$ contains a subgraph $H$ such that $X\subseteq V(H)$, $d_H(v)=f(v)$ for every $v\in X$, and $d_{H}(u)=1$ for every $u\in Y\cap V(H)$.
\end{LEM}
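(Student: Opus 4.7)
The plan is to reduce the statement to Hall's marriage theorem by a standard vertex-splitting construction. For each $v\in X$, I replace $v$ by $f(v)$ copies $v^{(1)},\ldots,v^{(f(v))}$, each inheriting the neighborhood $N_G(v)\subseteq Y$, and form a new bipartite graph $G'$ with partite sets $X':=\bigcup_{v\in X}\{v^{(1)},\ldots,v^{(f(v))}\}$ and $Y$. The requirement $d_H(v)=f(v)$ becomes the requirement that every element of $X'$ is saturated by a matching in $G'$; the fact that copies of the same $v$ are distinct vertices of $G'$ then forces them to be matched to distinct vertices of $Y$ in the original graph.

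Next I verify Hall's condition on $X'$. Given any $T\subseteq X'$, let $S:=\{v\in X:v^{(i)}\in T\text{ for some }i\}$ be the set of its ``originals'' in $X$. Then $|T|\le \sum_{v\in S}f(v)$, and by construction $N_{G'}(T)=N_G(S)$, so the hypothesis $|N_G(S)|\ge \sum_{v\in S}f(v)$ immediately gives $|N_{G'}(T)|\ge |T|$. Hall's theorem therefore yields a matching $M$ in $G'$ that saturates $X'$.

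Finally I extract the desired subgraph $H$ from $M$ by setting $E(H):=\{vu\in E(G):v^{(i)}u\in M\text{ for some }i\in[1,f(v)]\}$. Since $M$ is a matching, the $f(v)$ copies of each $v\in X$ are paired with $f(v)$ pairwise distinct neighbors in $Y$, so $d_H(v)=f(v)$; and since each $u\in Y$ is incident to at most one edge of $M$, we get $d_H(u)\le 1$, with equality exactly when $u\in V(H)$. In particular $X\subseteq V(H)$ because $f(v)\ge 1$ for every $v\in X$, giving all three conclusions.

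There is no real obstacle in this argument; the only point that needs care is the translation between a subset $T\subseteq X'$ and its set of originals $S\subseteq X$ when verifying Hall's condition, which is routine. The construction is the standard technique for converting a degree-prescription problem on one side of a bipartite graph into a simple matching problem, and nothing in the hypothesis $|N_G(S)|\ge \sum_{v\in S}f(v)$ requires any further strengthening.
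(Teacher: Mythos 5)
Your argument is correct. The vertex-splitting reduction to Hall's theorem is sound: the only point requiring care is the verification that $N_{G'}(T)=N_G(S)$ for a set $T\subseteq X'$ with original set $S$, and you handle that explicitly; the extraction of $H$ from the matching $M$ and the check of all three conclusions ($d_H(v)=f(v)$, $d_H(u)=1$ on $Y\cap V(H)$, and $X\subseteq V(H)$ via $f(v)\ge 1$) are all fine.

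Note, however, that the paper does not prove this lemma at all -- it is quoted verbatim as Theorem~2.10 of Akiyama and Kano, so there is no in-paper proof to compare against. Your write-up is essentially the standard proof of this "star-factor" existence criterion (expand each $v\in X$ into $f(v)$ clones and apply Hall's marriage theorem), which is also how the result is typically established in the literature. So while the proof is not novel, it is a complete and correct self-contained justification of a statement the authors only cite.
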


\begin{restatable*}{lem}{mainthm}
 Let $G$ be a graph. Then $G$ has a linear forest $F$ with the following properties:
    \begin{enumerate}[{\rm(1)}]
        \item  Each component of $F$ is a path of length $2$, $3$, or $4$;
        \item $G-V(F)$ consists of isolated vertices or isolated edges, and each vertex of $V(G)\setminus V(F)$ has degree at most $(\Delta(G)+1)/2$ in $G$;
        \item For each $v\in V(G)\setminus V(F)$ and $u\in N_G(v)\cap V(F)$,  we have $d_F(u)=2$.
    \end{enumerate}
\end{restatable*}

 \proof   It suffices to prove the statement for each component of $G$, and so we assume that $G$ is connected.
 Also, the claim holds trivially if $|V(G)| \le 2$. Thus, we assume that $|V(G)| \ge 3$.
 Let  $\df(G) =\max\{\sun(G-S)-2|S|: S\subseteq V(G)\}$ and $\Delta=\Delta(G)$.

 If $G$ has a $\{P_3, P_4, P_5\}$-factor, then the statement holds trivially. Thus we assume that $G$ contains no $\{P_3, P_4, P_5\}$-factor. By Theorem~\ref{thm:pathfactor}, there exists some $S\subseteq V(G)$ such that
$\sun(G-S)> 2|S|$.
The following claim indicates that we may assume $S\ne \emptyset$.

\begin{claim}\label{claim:sun-component}
    Let $D$ be any sun-component of $G-S$ of order at least three.  Then
    \begin{enumerate}[{\rm(1)}]
        \item  $D$ has a $\{P_4,P_5\}$-packing  $F_1$ that only  uncovers a vertex  $y$ that has degree one in $D$,
        and  the neighbor of $y$ in $D$ has degree $2$ in $F_1$.
        \item For any $w\in V(D)$ that is a vertex of the core of $D$, $D$ has a $\{P_4\}$-packing  $F_2$ that only uncovers $w$ and its pendant.
    \end{enumerate}
\end{claim}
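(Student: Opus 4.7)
Both parts rest on the same observation about sun structure. Since $D$ is a sun with $|V(D)|\ge 3$, its core $R$ is factor-critical of odd order $|V(R)|\ge 3$, so $R-v$ admits a perfect matching for every $v\in V(R)$. Denoting by $v^{*}$ the pendant attached to a core vertex $v$, each matching edge $uz$ of $R$ yields a $P_4$ in $D$ via the path $u^{*}\,u\,z\,z^{*}$, because $u\,u^{*}$, $uz$, and $z\,z^{*}$ are all edges of $D$. These elementary $P_4$'s are the building blocks of both packings.

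For (2), pick a perfect matching $M$ of $R-w$ and take $F_2$ to be the union of the $P_4$'s $u^{*}\,u\,z\,z^{*}$ indexed by the edges $uz\in M$. The paths are pairwise vertex-disjoint, they cover every vertex of $R-w$ together with every corresponding pendant, and the only leftover vertices are $w$ and its pendant $w^{*}$, exactly as required.

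For (1), the idea is to absorb a distinguished core vertex into the interior of a single $P_5$, so that exactly one pendant remains uncovered and has a degree-$2$ interior neighbor. Pick any $v\in V(R)$; because a factor-critical graph of order at least three is connected, $v$ has a neighbor $u$ in $R$. Take a perfect matching $M$ of $R-u$ and let $z$ be the $M$-partner of $v$. Replace the standard $P_4$ that $vz$ would contribute by the $P_5$ with vertex sequence $u^{*}\,u\,v\,z\,z^{*}$, which is a genuine path in $D$ since $uv\in E(R)$ and $vz\in M\subseteq E(R)$. Let $F_1$ consist of this $P_5$ together with the standard $P_4$'s arising from the remaining edges in $M\setminus\{vz\}$. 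A straightforward bookkeeping check confirms that $F_1$ covers every vertex of $D$ except the pendant $v^{*}$, that $v^{*}$ has degree $1$ in $D$, and that its unique neighbor $v$ is the middle vertex of the $P_5$ and therefore has degree $2$ in $F_1$.

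There is essentially no obstacle beyond this bookkeeping; the one conceptual step is to apply factor-criticality to $R-u$ rather than to $R-v$, which guarantees that $v$ is actually matched in $M$ and can therefore be spliced into the interior of a $P_5$ while the pendant $v^{*}$ becomes the unique uncovered vertex.
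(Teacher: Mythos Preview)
Your proof is correct and is essentially the same construction as the paper's: both take a perfect matching of $R$ minus one core vertex, turn each matching edge $uz$ into the $P_4$ $u^{*}uzz^{*}$, and for part~(1) extend one of these into a $P_5$ by absorbing the unmatched core vertex and its pendant, leaving exactly one pendant uncovered with its core neighbor as an interior vertex of the $P_5$. The only cosmetic difference is the order of selection (you pick $v$ first and then its neighbor $u$ to unmatch, whereas the paper picks the unmatched vertex $x$ first and then locates a matched neighbor $a_j$), but the resulting packing is identical.
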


\proof Let $R$ be  the core of $D$. For (1), let $x \in V(R)$; for (2), let $x=w$.
Then $R-x$ has a 1-factor $M$ as  $R$
is  factor-critical. Let $t=(|V(R)|-1)/2$ and $M=\{a_1b_1,a_2b_2,\ldots,a_tb_t\}$.
 By the definition of a sun, each $a_i$ has a pendant  $c_i$    and each $b_i$ has a pendant  $d_i$ in $D$. Moreover, $x$ has a neighbor, say $a_j$ in $D$ from $\{a_i,b_i: i\in [1,t]\}$, as well as a pendant $z\in V(D)$. Let $y=c_j$. Then  $F_1$ consisting of the paths $$
c_1a_1b_1d_1, \quad \ldots, \quad c_{j-1}a_{j-1}b_{j-1}d_{j-1}, \quad zxa_jb_jd_j, \quad c_{j+1}a_{j+1}b_{j+1}d_{j+1}, \quad\ldots,\quad c_{t}a_{t}b_{t}d_{t}
$$
is a $\{P_4,P_5\}$-packing  of $D$ that uncovers the only vertex $y$ of $D$. By the construction of $F_1$,
the neighbor $a_j$ of $y$  in $D$ has degree 2 in $F_1$.

The graph  $F_2$ consisting of the paths $$
c_1a_1b_1d_1, \quad \ldots, \quad c_{j-1}a_{j-1}b_{j-1}d_{j-1}, \quad  c_ja_jb_jd_j, \quad c_{j+1}a_{j+1}b_{j+1}d_{j+1}, \quad\ldots,\quad c_{t}a_{t}b_{t}d_{t}
$$
is a $\{P_4\}$-packing  of $D$ that uncovers the vertex $x$  and its pendant.
\qed

By the claim above,  we let  $ \emptyset \ne S \subseteq V(G)$  such that $$\sun(G-S) -2|S| =\df(G).$$

\begin{claim} \label{claim2.5} Let $D$ be a component of $G-S$ that  is not a sun. Then $D$ has a $\{P_3,P_4,P_5\}$-factor.
\end{claim}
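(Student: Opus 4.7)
The plan is to prove Claim~2.5 by contradiction, exploiting the extremal choice of $S$ as a maximizer of $\sun(G-S)-2|S|=\df(G)$ together with Kaneko's criterion in Theorem~\ref{thm:pathfactor}. The key idea: if some non-sun component $D$ of $G-S$ lacked a $\{P_3,P_4,P_5\}$-factor, then we could enlarge $S$ by a suitable subset of $V(D)$ and strictly increase $\sun(G-S)-2|S|$, violating maximality.

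In detail, I would suppose for contradiction that $D$ is a component of $G-S$ that is not a sun and that $D$ has no $\{P_3,P_4,P_5\}$-factor. Applying Theorem~\ref{thm:pathfactor} to $D$, there exists $T\subseteq V(D)$ with $\sun(D-T)>2|T|$. Because $D$ is connected and not a sun, choosing $T=\emptyset$ gives $\sun(D)=0\le 0$, so necessarily $T\ne\emptyset$ and $\sun(D-T)-2|T|\ge 1$.

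Next I would form $S'=S\cup T$. Since $T\subseteq V(D)$ and $D$ is a single component of $G-S$, deleting $T$ from $G-S$ only affects $D$; thus the components of $G-S'$ are exactly the components of $G-S$ other than $D$ together with the components of $D-T$. Because $D$ itself is not a sun, this gives $\sun(G-S')=\sun(G-S)+\sun(D-T)$, so
\[
\sun(G-S')-2|S'| \;=\; \bigl(\sun(G-S)-2|S|\bigr)+\bigl(\sun(D-T)-2|T|\bigr) \;\ge\; \df(G)+1,
\]
contradicting the choice of $S$ as a maximizer. Hence $D$ must admit a $\{P_3,P_4,P_5\}$-factor.

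I do not expect any serious obstacle in this argument; it is essentially a monotonicity/bookkeeping check. The only point requiring a moment of care is the identity $\sun(G-S')=\sun(G-S)+\sun(D-T)$, which uses both that $D$ is a single component of $G-S$ (so no sun-component outside of $D$ is disturbed by removing $T$) and that $D$ itself does not contribute a sun-component to $G-S$ (which is our standing hypothesis on $D$).
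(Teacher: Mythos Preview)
Your proof is correct and uses essentially the same argument as the paper: both exploit the maximality of $S$ together with the identity $\sun(G-(S\cup T))=\sun(G-S)+\sun(D-T)$ (valid since $D$ is not a sun) to force $\sun(D-T)\le 2|T|$ for every $T\subseteq V(D)$, whence Theorem~\ref{thm:pathfactor} yields the factor. The only cosmetic difference is that the paper phrases this directly for arbitrary $T$, whereas you phrase it as a contradiction starting from a hypothetical bad $T$.
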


\proof  Let $T\subseteq V(D)$. Then
\begin{eqnarray*}
   \df(G) & \ge & \sun(G-(S\cup T))-2|S|-2|T|   \\
   &=& \sun(G-S)+\sun(D-T)-2|S|-2|T| \\
   &=& \df(G)+\sun(D-T)-2|T|,
\end{eqnarray*}
implying  $\sun(D-T)\leq 2|T|$. Thus Theorem~\ref{thm:pathfactor} implies that $D$ has a $\{P_3,P_4,P_5\}$-factor.
\qed

Let $D_1,D_2,\ldots,D_{\ell_1}$ be all the components of $G-S$ and $D_1,D_2,\ldots,D_{\ell}$ be exactly the sun components of $G-S$.  We let $B$ be the bipartite multigraph obtained
from $G$ by contracting each $D_i$ into a single vertex $w_i$ (removing internal edges of $D_i$) for $i\in [1,\ell_1]$
and removing all the edges within $S$. Then $B$ has a bipartition as $S$
and $W:=\{w_1,\ldots,w_{\ell_1}\}$.

\begin{claim}\label{claim2.6} The multigraph  $B$  has a $\{P_3\}$-packing $F_1$   such that $d_{F_1}(s)=2$ for each $s\in S$.
\end{claim}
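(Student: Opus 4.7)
The plan is to deduce Claim~\ref{claim2.6} from the $f$-factor Lemma~\ref{starfactor} applied to (the simple underlying graph of) the bipartite multigraph $B$, taking the parts to be $X=S$ and $Y=W$ and the weight function to be $f(s)=2$ for every $s\in S$. Assuming the Hall-type hypothesis of that lemma can be verified, the resulting subgraph $H$ satisfies $d_H(s)=2$ for every $s\in S$ and $d_H(w)\le 1$ for every $w\in W$. Each $s$ then has two distinct neighbours in $W$ and each $w$ is used at most once, so the components of $H$ are vertex-disjoint $P_3$'s centred at the vertices of $S$, which is exactly the required $\{P_3\}$-packing $F_1$ with $d_{F_1}(s)=2$ for all $s\in S$.

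The whole argument therefore reduces to verifying that $|N_B(T)|\ge 2|T|$ for every $T\subseteq S$. The plan is to extract this from the extremal choice of $S$. Since $S$ maximizes $\sun(G-S')-2|S'|$ over $S'\subseteq V(G)$, and $S\setminus T$ is a legitimate competitor, one has
\[
\sun\bigl(G-(S\setminus T)\bigr)-2|S\setminus T|\;\le\;\df(G)\;=\;\sun(G-S)-2|S|,
\]
which rearranges to $\sun(G-(S\setminus T))\le \sun(G-S)-2|T|$. On the other hand, any sun-component $D_i$ of $G-S$ with $i\in[1,\ell]$ that has no neighbour in $T$, i.e.\ with $w_i\notin N_B(T)$, persists unchanged as a sun-component of $G-(S\setminus T)$, so
\[
\sun\bigl(G-(S\setminus T)\bigr)\;\ge\;\sun(G-S)-\bigl|N_B(T)\cap\{w_1,\dots,w_\ell\}\bigr|\;\ge\;\sun(G-S)-|N_B(T)|.
\]
Chaining the two estimates yields $|N_B(T)|\ge 2|T|$, precisely the condition needed to invoke Lemma~\ref{starfactor}.

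The one point that requires a brief sanity check is that $B$ is a multigraph: since $|N_B(T)|$ counts distinct neighbours, the Hall-type inequality transfers verbatim to the simple version of $B$, and the $Y$-side bound $d_H(w)\le 1$ automatically rules out the pathology that both of $s$'s $H$-edges could land on the same $w$ via parallel edges. Apart from this bookkeeping, I expect no genuine obstacle: the proof is a single application of Lemma~\ref{starfactor}, with the Hall condition coming for free from the maximality property that defines $S$.
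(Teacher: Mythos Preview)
Your proposal is correct and essentially identical to the paper's proof: both apply Lemma~\ref{starfactor} with $f\equiv 2$ on $X=S$ and verify the Hall-type condition $|N_B(T)|\ge 2|T|$ by comparing $\sun(G-(S\setminus T))$ with $\sun(G-S)$ via the maximality of $S$. Your additional remark about passing to the underlying simple graph of the multigraph $B$ is a sensible piece of bookkeeping that the paper leaves implicit.
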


\begin{proof}
Let $S_1\subseteq S$ and $S_2=S\setminus S_1$. By Lemma~\ref{starfactor}, it suffices to  show that $|N_B(S_1)| \ge  2|S_1|$. Indeed,
\begin{eqnarray*}
    \df(G) &\ge & \sun(G-S_2)-2|S_2|  \\
     &\ge & \sun(G-S)-|N_B(S_1)|-2|S|+2|S_1| \\
      &=& \df(G)-|N_B(S_1)| +2|S_1|,
\end{eqnarray*}
implying $|N_B(S_1)| \ge  2|S_1|$.
\end{proof}

Let  $U=\{u\in W: d_{B}(u)\geq \left\lceil\frac{\Delta}{2}\right\rceil\}$.
We next show that $B$ has a  $\{P_3\}$-packing $F$
covering  $S \cup U$  with vertices of $S$ having degree 2 in $F$.  By Claim~\ref{claim2.6}, we choose a $\{P_3\}$-packing $F_1$ with vertices of $S$ having degree 2
that covers  as many vertices of $U$ as possible. If $U\subseteq V(F_1)$, then we  let $F=F_1$.
Otherwise, we assume that there exists $u\in U$ such that $u\not\in V(F_1)$.  Since $F_1$ is chosen to
cover as many vertices of $U$ as possible, for any $s\in N_B(u)$, the two neighbors of
  $s$ in $F_1$ are from $U$.  We consider all the paths of $B$ starting at $u$
  with edges alternating in non-$F_1$ edges and   $F_1$ edges, and let $U^* \subseteq U$
  and $S^*\subseteq S$ be respectively the set of vertices contained in $U$ and $S$ that are reached by these paths.
  Then we have $u\in U^*$ and $|U^*| =2|S^*|+1$ and $N_B(U^*) \subseteq S^*$.
  However,
  $$ (2|S^*|+1)\cdot \frac{\Delta}{2} = |U^*|\cdot \frac{\Delta}{2}\le |E_B(S^*, U^*)| \le |S^*| \Delta,$$  a contradiction.
  Thus, $B$ has a  $\{P_3\}$-packing $F$
covering  $S \cup U$  with vertices of $S$ having degree 2 in $F$.

We now construct a desired $\{P_3,P_4,P_5\}$-packing of $G$ as follows.
   Let $U=\{u_1,\ldots,u_t\}$ and $D_1,\ldots,D_t$ be the sun components of $G-S$ corresponding to the vertices in $U$, where $t=|U|$.
    We let $F$  be a $\{P_3\}$-packing
that covers $S \cup U$ and with vertices of $S$ having degree 2.
For each $i\in [1,t]$,  let $su_i$ be the edge of $F$  and  let $w_i\in V(D_i)$ such that
$w_is \in E(G)$. Note that $w_i$ is a vertex from the core of $D_i$.  If $|V(D_i)| = 2$, we let  $z_i$ be the neighbor of $w_i$ in $D_i$.
If  $|V(D_i)| \ge 3$, we let  $z_i$ be the pendant  of $w_i$ in $D_i$, then
$D_i$ has a
$\{P_4\}$-packing $F_i$ covering $V(D_i)\setminus \{w_i,z_i\}$
 by Claim~\ref{claim:sun-component}(2).
 We let $F^*$ be obtained from $F$ by replacing $su_i$ with $sw_iz_i$ for each $i\in [1,t]$ when $|V(D_i)| \ne 1$,
 and with $sw_i$ if $|V(D_i)| = 1$.  Then $F^*$ becomes a $\{P_3, P_4, P_5\}$-packing of $G$
 that covers $S$ and one or two vertices from each $D_i$ for $i\in [1,t]$.

For each $D_i$ with $i\in [t+1,\ell]$,   if  $|V(D_i)| \le 2$, then as the vertex  corresponding to $D_i$ in $B$
is not contained in $U$,  it follows that any $v\in V(D_i)$ satisfies $d_G(v) \le (\Delta+1)/2$.
If $|V(D_i)|  \ge  3$, by Claim~\ref{claim:sun-component}(1), we let $F_i$  be a $\{P_4,P_5\}$-packing  of $D_i$  covering all of $V(D_i)$ except for one degree-one vertex $y$.  Furthermore, the neighbor of $y$  in $G$ has degree 2 in $F_i$.
Finally, for each $D_i$ with $i\in [\ell+1,\ell_1]$, by Claim~\ref{claim2.5}, we let $F_i$ be a $\{P_3,P_4,P_5\}$-factor of $D_i$.

 Then the union of $F^*$ and all the $F_i$'s that are defined above gives a $\{P_3,P_4,P_5\}$-packing  $H$ of $G$.
 By the construction, the uncovered vertices by $H$ are either vertices of degree one in $G$ from a  sun-component of $G-S$ and each sun-component of $G-S$ with  one or at least three vertices has at most one such vertex uncovered by $H$, or are two adjacent vertices that form a sun-component of order two of $G-S$.
 Furthermore, by our choice of $U$ in $B$,   for any such uncovered vertex $v$, we have $d_G(v) \le (\Delta+1)/2$.
 Thus $H$ satisfies  Lemma~\ref{thm:linear-forest}(2).

 If there is a vertex $v\in V(G)\setminus V(H)$ and $u\in V(H)$ such that $d_H(u)=1$, then we
 add the vertex $v$ and the edge $uv$ to $H$.  By splitting a $P_6$ into two $P_3$'s in the resulting graph of $H$ if it is needed,  we see that the resulting graph from $H$ still contains a $\{P_3,P_4,P_5\}$-packing of $G$ that satisfies  Lemma~\ref{thm:linear-forest}(1). Thus, by taking $H$ to be a $\{P_3,P_4,P_5\}$-packing  of $G$
 that satisfies Lemma~\ref{thm:linear-forest}(1) with maximal number of vertices, we get a
$\{P_3,P_4,P_5\}$-packing that also satisfies Lemma~\ref{thm:linear-forest}(3).
\qed

\bibliographystyle{abbrv}
\bibliography{vde}

\end{document}